\documentclass[11pt,reqno]{amsart}

\usepackage[T1]{fontenc}
\usepackage{amsmath,amssymb,amsthm}
\usepackage{microtype}
\usepackage{tikz}
\usetikzlibrary{arrows.meta}
\usepackage[section]{placeins}
\usepackage[margin=1.25in]{geometry}
\usepackage[colorlinks=true,linkcolor=blue,citecolor=blue,urlcolor=blue]{hyperref}
\hypersetup{
  pdftitle={Finitely generated positive cones in Fn x Z},
  pdfauthor={Hang Lu Su},
  pdfkeywords={left-orderable group, positive cone, isolated left-order, free group,
    finite-index subgroup}
}

\theoremstyle{plain}
\newtheorem{theorem}{Theorem}[section]
\newtheorem{lemma}[theorem]{Lemma}
\newtheorem{proposition}[theorem]{Proposition}
\newtheorem{corollary}[theorem]{Corollary}

\theoremstyle{definition}
\newtheorem{remark}[theorem]{Remark}

\newcommand{\Z}{\mathbb{Z}}
\newcommand{\LO}{\operatorname{LO}}
\newcommand{\inv}{^{-1}}

\DeclareMathOperator{\Stab}{Stab}
\DeclareMathOperator{\Sym}{Sym}

\begin{document}

\title[Finitely generated positive cones in $F_n\times\Z$]
      {Finitely generated positive cones in $F_n \times \Z$}

\author{Hang Lu Su}

\subjclass[2020]{Primary 20F60; Secondary 06F15, 20E05}
\keywords{left-orderable group, positive cone, isolated left-order, free group,
finite-index subgroup}

\begin{abstract}
We construct, for every even $n\ge2$, a positive cone on $F_n\times\Z$ that is finitely generated as a semigroup, extending the previously known construction for $n=2$~\cite{Su2020}.
Malicet, Mann, Rivas and Triestino proved that $F_n\times\Z$ admits an isolated left-order if and only if $n$ is even~\cite{MMRT}.
Since every finitely generated positive cone determines an isolated left-order, for $n\ge2$, the group $F_n\times\Z$ admits a finitely generated positive cone if and only if $n$ is even.
\end{abstract}

\maketitle

\section{Introduction}

A group $G$ is \emph{left-orderable} if it admits a total order invariant under left multiplication.
Such an order is encoded by its \emph{positive cone}: a subsemigroup $P\subseteq G$ for which
\[
  G=P\sqcup P\inv\sqcup\{1\}.
\]
For $S\subseteq G$, write $\langle S\rangle^+$ for the semigroup of non-empty
products $s_1\cdots s_\ell$ with $s_i\in S$ and $\ell\ge1$.
If $P=\langle S\rangle^+$ for a finite set $S$, we call $P$ \emph{finitely generated}.

Non-abelian free groups admit no finitely generated positive cones by~\cite[Corollary~2]{HermillerSunic}.
Taking a direct product with $\Z$ changes the picture.
Mann and Rivas constructed an isolated left-order on $F_2\times\Z$~\cite[Corollary~5.5]{MannRivas}, and $F_2\times\Z$ in fact admits a finitely generated positive cone~\cite[Corollary~1.3]{Su2020}.
Malicet, Mann, Rivas and Triestino subsequently proved that $F_n\times\Z$ has an isolated left-order if and only if $n$ is even~\cite[Theorem~1.1]{MMRT}.

A finitely generated positive cone determines an isolated left-order (Lemma~\ref{lem:isolated}).
Thus the result of Malicet, Mann, Rivas and Triestino obstructs finite generation when $n$ is odd.
Our main result is the corresponding existence statement in every even rank.

\begin{theorem}\label{thm:main}
For every even integer $n\ge2$, the group $F_n\times\Z$ has a positive cone which is finitely generated as a semigroup.
\end{theorem}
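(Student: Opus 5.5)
The plan is to deduce the general even case from the known case $n=2$ by passing to a finite-index subgroup. The index $k$ subgroups of $F_2$ are free of rank $k+1$, so for even $n\ge 2$ we fix a subgroup $K\le F_2$ of index $n-1$. Then $K\cong F_n$, and $H=K\times\Z$ has index $n-1$ in $G=F_2\times\Z$. Let $P=\langle S\rangle^+$ be the finitely generated positive cone on $F_2\times\Z$ from~\cite{Su2020}. The set $P\cap H$ is automatically a positive cone on $H\cong F_n\times\Z$. So the whole task is to show that $P\cap H$ is finitely generated as a semigroup, possibly after a careful choice of $K$ and possibly after modifying $P$.

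First, I will record the features of the cone in~\cite{Su2020} that I need. The main one is that the central generator $z$ of the $\Z$ factor (or a power of it) is positive and cofinal, that is, every $g\in G$ satisfies $z^{-N}<g<z^{N}$ for some $N$. I also want an explicit finite generating set $S$. I would choose $K$ to be normal if that helps, for example the kernel of a map $F_2\to\Z/(n-1)$. Since $z$ is central, it lies in $H$ and commutes with every coset representative, which will matter below.

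Second, I run a Reidemeister--Schreier style argument on positive words. Take $h\in P\cap H$ and write $h=s_1\cdots s_\ell$ with $s_i\in S$. Track the right cosets $Hs_1\cdots s_i$. Cut the word at each index where the coset returns to $H$; each piece is then a positive word representing an element of $H$. If every minimal return loop had length at most a bound $L$, then $P\cap H$ would be generated by the finitely many elements of $H$ represented by positive words in $S$ of length at most $L$, and we would be done. The issue is that minimal loops need not be bounded: a coset can repeat inside an excursion, so that $h=avb$ with $ava^{-1}\in H$, and the natural decomposition $h=(ava^{-1})(ab)$ has no reason to have positive factors.

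This repair step is the main obstacle. My first attempt will use centrality and cofinality of $z$ to shift positivity between the factors. I would write
\[
h=(ava^{-1}z^{m})(z^{-m}ab),
\]
and try to choose $m$ with both factors in $P\cap H$. Cofinality gives the first factor positive for large $m$, but the second factor then needs $ab>z^{m}$. That holds only when $h$ is large, so the argument must be organized as an induction on a complexity measure of $h$, such as word length or the power of $z$ it dominates, with only finitely many small base cases. If this fails in general, the fallback is to imitate the construction of~\cite{Su2020} directly on $F_n\times\Z$. Using $F_n\le F_2$ as a guide, I would write down an explicit finite set $S_n$ of candidate generators. I would then verify $G=\langle S_n\rangle^+\sqcup\langle S_n\rangle^{+\,-1}\sqcup\{1\}$ by the dynamical or combinatorial argument of that paper, taking the evenness of $n$ from the pairing of generators that is compatible with the obstruction of~\cite{MMRT}.
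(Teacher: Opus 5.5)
\textbf{There is a genuine gap.} The step that should prove finite generation is never carried out. The cofinality repair is stated as a hope, the ``induction on a complexity measure'' is never specified, and the fallback (imitating the $n=2$ construction and ``taking the evenness of $n$ from the pairing of generators'') is not an argument.

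Worse, the repair cannot work with the ingredients you give. It uses only that $z$ is central and cofinal, and it treats $K$ as an arbitrary finite-index subgroup, so it never sees the parity of $n$. Any argument built from those ingredients alone would apply verbatim to an index-$2$ subgroup $K\le F_2$. It would then make $P\cap(K\times\Z)$ a finitely generated positive cone on $F_3\times\Z$, hence an isolated order by Lemma~\ref{lem:isolated}. That contradicts the Malicet--Mann--Rivas--Triestino theorem.

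You have also misdiagnosed the obstacle. You need not cut a positive word only where it returns to $H$. Take a right transversal $T\ni 1$ and set $\gamma(t,s)=ts\,\overline{ts}^{-1}$. The Reidemeister--Schreier factorisation $h=\prod_i\gamma(\overline{w_{i-1}},s_i)$ then writes every positive word representing an element of $H$ as a product of the finitely many elements $\gamma(t,s)$, with $t\in T$ and $s\in S$, however long its excursions are. The only question is the sign of these finitely many elements. If all of them lie in $P\cup\{1\}$, the nontrivial ones generate $H\cap P$. So the real task is to choose $K$ and $T$ so that every Schreier element is positive or trivial.

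The paper does this inside $\Gamma_2=\langle a,b\mid ba^2b=a\rangle$ with the cone $\langle a,b\rangle^+$. There the cone on $F_2\times\Z$ is generated by six elements of the form $b^{-s}ab^{r}$ together with $b^6$. Moreover $b^pab^q$ is positive for \emph{all} $p,q\in\Z$; you cannot see this if you only know $F_2\times\Z$ with a cofinal central element.

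The transversal is $T=\{b^{6k}:0\le k\le n-2\}$, consisting of powers of the positive generator $b^6$. With this choice every Schreier element is either $b^{6k-s}ab^{r-6k'}$, which is positive, or equals $1$ or $b^{6(n-1)}$. For $T$ to be a transversal at all, $b^6$ must act on the $n-1$ cosets as a single cycle. Its image in $F_2=\langle x,y\rangle$ is the commutator $y^{-1}x^{-1}yx$.

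The paper therefore builds $K_n$ from two involutions $\sigma_x,\sigma_y$ on $n-1$ points whose composite $\theta$, the action of $yx$, is an $(n-1)$-cycle. The commutator acts as $\theta^2$, which is transitive exactly when $n-1$ is odd; this is where evenness enters.

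Your suggested normal choice, the kernel of a map $F_2\to\Z/(n-1)$, is exactly the wrong kind. Its quotient is abelian, so the commutator lies in $K$ and $b^6$ acts trivially on the cosets.
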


Combining Theorem~\ref{thm:main} with the known obstruction above gives the full classification.

\begin{corollary}\label{cor:parity}
For $n\ge2$, the group $F_n\times\Z$ has a finitely generated positive cone if and only if $n$ is even.
\end{corollary}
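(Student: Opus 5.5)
The corollary follows by combining Theorem~\ref{thm:main} with the obstruction of Malicet, Mann, Rivas and Triestino. The two directions are independent, and I would treat them separately.

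\emph{Even $n$.} Suppose $n\ge2$ is even. Theorem~\ref{thm:main} supplies a positive cone $P\subseteq F_n\times\Z$ and a finite set $S$ with $P=\langle S\rangle^+$. By definition this is a finitely generated positive cone, so nothing further is needed here. All the real work of the paper lies in this existence statement, which we are allowed to assume.

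\emph{Odd $n$.} Suppose $n\ge3$ is odd, and assume for contradiction that $F_n\times\Z$ has a finitely generated positive cone $P=\langle S\rangle^+$. By Lemma~\ref{lem:isolated}, the left-order determined by $P$ is isolated in $\LO(F_n\times\Z)$.

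I would recall why this holds, to make sure the lemma applies as stated. The basic open sets of $\LO(G)$ are the sets $\{Q : g_1,\dots,g_k\in Q\}$. The set $\{Q : S\subseteq Q\}$ is therefore open, and it contains $P$. If $Q$ is any positive cone containing $S$, then $P=\langle S\rangle^+\subseteq Q$, because $Q$ is a semigroup. Both $P$ and $Q$ satisfy $G=P\sqcup P\inv\sqcup\{1\}$ and $G=Q\sqcup Q\inv\sqcup\{1\}$, so the containment $P\subseteq Q$ forces $P=Q$. Hence this open set is exactly $\{P\}$, and $P$ is isolated.

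On the other hand, \cite[Theorem~1.1]{MMRT} says that $F_n\times\Z$ admits an isolated left-order only when $n$ is even. This contradicts the previous paragraph, so no finitely generated positive cone exists for odd $n$.

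\emph{Main obstacle.} There is no real obstacle in the corollary itself. The only point to check is that the notion of ``isolated'' in Lemma~\ref{lem:isolated} matches the one used in \cite{MMRT}. Both use the standard topology on $\LO(G)$, namely the subspace topology inherited from $\{0,1\}^G$ by identifying an order with its positive cone, so they agree.
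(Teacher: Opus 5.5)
Your proposal is correct and matches the paper's proof: the even case is Theorem~\ref{thm:main}, and the odd case uses Lemma~\ref{lem:isolated} to turn a finitely generated positive cone into an isolated left-order, contradicting \cite[Theorem~1.1]{MMRT}. Your re-derivation of the lemma and your check that both works use the same topology on $\LO(G)$ are sound, though the paper does not spell out the latter.
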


\begin{proof}
The even case is Theorem~\ref{thm:main}.
For odd $n$, Lemma~\ref{lem:isolated} would turn a finitely generated positive cone into an isolated left-order, contradicting~\cite[Theorem~1.1]{MMRT}.
\end{proof}

We prove Theorem~\ref{thm:main} through the subgroup tower
\[
  K\cong F_n\times\Z \ \le \ H\cong F_2\times\Z \ \le \ \Gamma_2,
  \qquad \Gamma_2=\langle a,b\mid ba^2b=a\rangle.
\]
We give a criterion, based on Reidemeister--Schreier rewriting (see~\cite[Section~2.3]{MagnusKarrassSolitar}), for passing a finitely generated positive cone to a finite-index subgroup. We apply it first to the Dubrovina--Dubrovin cone $\langle a,b\rangle^+$ on $\Gamma_2$~\cite[Theorem~5]{DubrovinaDubrovin} to obtain a finitely generated positive cone on $H$, and then apply it again to obtain one on $K\cong F_n\times\Z$.

The construction of the cone on $H$ appears in~\cite[Proposition~5.2 and proof of Corollary~1.3]{Su2020}. The new contribution is the cyclic-cover construction in Section~\ref{sec:cover}, which enables the second application uniformly for every even rank $n\ge4$. This construction first appeared, with a longer case-by-case coset calculation, in the author's thesis~\cite[Chapter~12]{SuThesis}; the cycle computation here makes the role of parity explicit.

\section{Passing finitely generated positive cones to finite-index subgroups}\label{sec:lifting}

Positive cones correspond bijectively to left-orders: given $P$, define $g \prec h$ if and only if $g\inv h\in P$.
We identify the space $\LO(G)$ of left-orders with the corresponding subspace of $\{0,1\}^G$.
Thus the sets $\{P:g\in P\}$, for $g\in G$, form a subbasis for its topology.

We first record the elementary observation used to obtain the odd-rank obstruction in Corollary~\ref{cor:parity}.

\begin{lemma}\label{lem:isolated}
If a positive cone $P$ of $G$ is finitely generated as a semigroup, then $P$ is an isolated point of $\LO(G)$.
\end{lemma}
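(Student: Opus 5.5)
The plan is to exhibit an explicit open neighbourhood of $P$ in $\LO(G)$ that contains only $P$. Write $P=\langle S\rangle^+$ with $S=\{s_1,\dots,s_k\}$ finite. Consider the finite intersection of subbasic open sets
\[
  U=\{Q\in\LO(G): s_1\in Q,\dots,s_k\in Q\}.
\]
It is open in $\LO(G)$, being a finite intersection of sets of the form $\{Q:g\in Q\}$, and it contains $P$ because each $s_i$ lies in $P$. The claim is that $U=\{P\}$.

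Let $Q\in U$. Since $Q$ is a subsemigroup that contains every $s_i$, it contains every non-empty product of the $s_i$. Hence $P=\langle S\rangle^+\subseteq Q$. For the reverse inclusion I will use the trichotomy $G=P\sqcup P\inv\sqcup\{1\}$. Take $g\in Q$. Then $g\neq1$, since $1\notin Q$. If $g$ were in $P\inv$, then $g\inv\in P\subseteq Q$, so both $g$ and $g\inv$ would lie in $Q$. This contradicts $Q\cap Q\inv=\emptyset$. Therefore $g\in P$, and so $Q=P$.

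So $\{P\}=U$ is open, and $P$ is isolated. I see no real obstacle here. The only point needing care is that the topology is the subspace topology from $\{0,1\}^G$, so a finite intersection of the subbasic sets is genuinely open. This uses only the finiteness of $S$, and finiteness is exactly where the hypothesis enters.
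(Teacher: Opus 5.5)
Your proposal is correct and follows essentially the same route as the paper. The paper also takes the open set $\{Q\in\LO(G): S\subseteq Q\}$, uses closure under multiplication to get $P\subseteq Q$, and rules out strict inclusion via $q\inv\in P\subseteq Q$ contradicting trichotomy for $Q$.
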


\begin{proof}
Write $P=\langle S\rangle^+$ with $S$ finite.
The set $V=\{Q\in\LO(G):S\subseteq Q\}$ is open because $S$ is finite.
For $Q\in V$, closure under multiplication gives $P\subseteq Q$.
If this inclusion were strict, some $q\in Q\setminus P$ would satisfy $q\inv\in P\subseteq Q$, contradicting the trichotomy for $Q$.
Hence $V=\{P\}$.
\end{proof}

It remains to construct a cone for even $n$.

If $P$ is a positive cone on $G$ and $K\le G$, then $K\cap P$ is automatically a positive cone on $K$.
Thus the difficulty is not defining the cone on $K$, but proving that it remains finitely generated.
The following criterion gives a sufficient condition: under a suitable positivity hypothesis, the Reidemeister--Schreier generators for $K$ also generate $K\cap P$ as a semigroup.
It abstracts the rewriting argument used in~\cite[proof of Proposition~5.2]{Su2020}.
Let $K\le G$ and let $T$ be a right transversal containing $1$, so that $G=\bigsqcup_{t\in T}Kt$.
For $g\in G$, denote by $\overline g$ the unique element of $T$ such that $Kg=K\overline g$.
For $t\in T$ and $s\in G$, set
\[
  \gamma(t,s):=ts\,\overline{ts}\inv\in K.
\]
Indeed, $Kts=K\overline{ts}$, so $ts\,\overline{ts}\inv\in K$.

\begin{lemma}\label{lem:lifting}
Let $P$ be a positive cone of $G$ with $P = \langle S \rangle^{+}$ for a finite set $S \subseteq G$.
Let $K \le G$, and let $T$ be a right transversal for $K$ in $G$ with $1 \in T$.
Suppose that
\[
  \gamma(t,s)\in P\cup\{1\}
  \qquad(t\in T,\ s\in S).
\]
Then, setting $S_K:=\{\gamma(t,s):t\in T,\ s\in S\}\setminus\{1\}$,
\[
  K\cap P=\langle S_K\rangle^+.
\]
In particular, if $K$ has finite index in $G$, then $K\cap P$ is a finitely generated positive cone of $K$.
\end{lemma}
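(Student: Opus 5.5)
The two inclusions have different characters, and I would treat them separately. The inclusion $\langle S_K\rangle^+\subseteq K\cap P$ is immediate. Each $\gamma(t,s)$ lies in $K$ by construction. By hypothesis, each element of $S_K$ lies in $P$, since the value $1$ has been discarded. Both $K$ and $P$ are closed under multiplication, so every non-empty product of elements of $S_K$ lies in $K\cap P$.

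For the reverse inclusion I would run the standard Reidemeister--Schreier rewriting. Take $g\in K\cap P$ and write $g=s_1\cdots s_\ell$ with $s_i\in S$ and $\ell\ge1$, which is possible since $P=\langle S\rangle^+$. Define transversal elements inductively by $t_0=1$ and $t_i=\overline{t_{i-1}s_i}$. Since $Kt_{i-1}=Ks_1\cdots s_{i-1}$, induction gives $t_i=\overline{s_1\cdots s_i}$. The product then telescopes:
\[
  \gamma(t_0,s_1)\gamma(t_1,s_2)\cdots\gamma(t_{\ell-1},s_\ell)
  = t_0 s_1\cdots s_\ell\, t_\ell\inv = g\,t_\ell\inv .
\]
Because $g\in K$, we have $t_\ell=\overline{g}=1$, so the product equals $g$. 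Now delete every factor equal to $1$; the surviving factors all lie in $S_K$.

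The only genuine obstacle is making sure some factor survives, so that the product is non-empty and hence lies in $\langle S_K\rangle^+$. Here positivity does the work. Since $g\in P$, we have $g\neq1$, so not all the factors $\gamma(t_{i-1},s_i)$ can be trivial. Therefore $g$ is a non-empty product of elements of $S_K$. This is exactly where the hypothesis $\gamma(t,s)\in P\cup\{1\}$ is used. Without it, the rewriting would only express $g$ as a word in the Schreier generators and their inverses, or with non-positive letters, and the resulting product would not lie in the semigroup.

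For the final sentence of the statement, note first that $K\cap P$ is a positive cone of $K$. It is a subsemigroup, and the decomposition $G=P\sqcup P\inv\sqcup\{1\}$ restricts to $K=(K\cap P)\sqcup(K\cap P)\inv\sqcup\{1\}$. If $[G:K]<\infty$, then $T$ is finite, and since $S$ is finite, $S_K$ is finite as well. Hence $K\cap P=\langle S_K\rangle^+$ is a finitely generated positive cone of $K$.
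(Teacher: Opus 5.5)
Your proof is correct and is essentially the paper's argument: the easy inclusion from $S_K\subseteq K\cap P$, then Reidemeister--Schreier rewriting of a positive $S$-word for $g\in K\cap P$, deleting trivial factors, with non-emptiness coming from $g\neq 1$, and finiteness of $S_K$ from finiteness of $T$ and $S$. One small point about your commentary: the hypothesis $\gamma(t,s)\in P\cup\{1\}$ is not what drives non-emptiness or the reverse inclusion, since rewriting a positive word in $S$ never produces inverse letters; it is used only to get $S_K\subseteq P$ in the easy inclusion, which is where your proof in fact uses it.
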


\begin{proof}
The intersection $K\cap P$ is a positive cone of $K$: closure is immediate, and for every $k\in K\setminus\{1\}$, exactly one of $k$ and $k\inv$ lies in $P$.
Every $\gamma(t,s)$ lies in $K$, and by definition
of $S_K$, $S_K\subseteq P$. Hence $S_K\subseteq K\cap P$, and therefore
$\langle S_K\rangle^+\subseteq K\cap P$.

To show that $K\cap P\subseteq\langle S_K\rangle^+$, write $S^+$ and $S_K^+$ for the
sets of non-empty words over $S$ and over $S_K$, and let
$\pi:(S\cup S_K\cup\{1\})^*\to G$ be the standard evaluation map. We will show
that for every word $w\in S^+$ whose image $\pi(w)$ is in $K$, there is
a word $v\in S_K^+$ such that $\pi(v)=\pi(w)$.

Write $w=s_1\cdots s_\ell$ and let $w_i=s_1\cdots s_i$, with $w_0$ the
empty word. The Reidemeister--Schreier rewriting map $\tau$ gives
\[
  \tau(w)=\prod_{i=1}^{\ell}
  \gamma\bigl(\overline{\pi(w_{i-1})},s_i\bigr),
  \qquad \pi(\tau(w))=\pi(w),
\]
because $\pi(w)\in K$. Every non-identity factor in $\tau(w)$ belongs to
$S_K$. After deleting the identity factors, we therefore obtain a word
$v$ over $S_K$ with $\pi(v)=\pi(w)$. This word is non-empty because
$\pi(w)\in P$, so $\pi(w)\ne1$. Hence $v\in S_K^+$, proving the desired
inclusion.
Finally, if $K$ has finite index in $G$, then $T$ is finite. Since $S$ is
finite, so is $S_K$.
\end{proof}

\section{A positive cone on \texorpdfstring{$F_2\times\Z$}{F2 x Z}}\label{sec:navas}

This section recalls the $F_2\times\Z$ construction from~\cite[Section~5]{Su2020} and fixes notation for the higher-rank extension.
In particular, the subgroup $H$, the generating set $Y$, and the isomorphism $\psi$ are all already present there.
We include the details to make the later argument self-contained.

Set
\[
  \Gamma_2:=\langle a,b\mid ba^2b=a\rangle\cong B_3,
  \qquad
  a=\sigma_1\sigma_2,\quad b=\sigma_2\inv.
\]

\begin{theorem}[{Dubrovina--Dubrovin~\cite[Theorem~5]{DubrovinaDubrovin}; see also Navas~\cite[Main Theorem]{Navas}}]\label{thm:navas}
The subsemigroup $P_2:=\langle a,b\rangle^+$ is a positive cone of $\Gamma_2$.
\end{theorem}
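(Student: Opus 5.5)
The plan is to prove the two halves of the positive-cone condition separately. Since $P_2$ is a semigroup by definition, I need (i) $\Gamma_2=P_2\cup P_2\inv\cup\{1\}$ and (ii) $1\notin P_2$. Condition (ii) forces $P_2\cap P_2\inv=\emptyset$: if $g$ and $g\inv$ were both in $P_2$, then $1=gg\inv\in P_2$. Throughout I use the element $\Delta:=a^2b$ (in braid terms $\sigma_1\sigma_2\sigma_1$) and its square. The relation gives
\[
  \Delta^2=a^2b\,a^2b=a^2\cdot a=a^3 .
\]
Since $\Delta^2$ generates the centre of $B_3$, the element $a^3$ is central in $\Gamma_2$, and it lies in $P_2$.

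For (i), I first show that $a^3$ is a common left and right multiple of both generators inside the monoid $M$ generated by $a$ and $b$. On the $a$ side this is $a^3=a\cdot a^2$. On the $b$ side the relation gives $a^3=b\cdot(a^2ba^2)$, and symmetrically $a^3=(a^2ba^2)\cdot b$. Consequently both $a\inv$ and $b\inv$ can be written as $a^{-3}$ times an element of $M$; for instance $b\inv=a^{-3}\,a^2ba^2$. Because $a^3$ is central, every $g\in\Gamma_2$ then has the form $g=a^{3k}w$ with $k\in\Z$ and $w\in M$. If $k\ge0$, then $g\in P_2\cup\{1\}$. If $k<0$, I peel letters off $w$ by induction on the length of $w$. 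Writing $w=sw'$ and $a^3=sx_s$ with $x_s\in M$, we get $a^{-3}sw'=x_s\inv w'$. The aim is to show that an expression $u\inv v$ with $u,v\in M$ always lies in $P_2\cup P_2\inv\cup\{1\}$. I would do this by repeatedly cancelling common first letters, or by using the cube $a^3$ to replace a mismatched pair $s\inv t$ ($s\neq t$ generators) by a word whose letters all have the same sign. This is a Garside-style argument, and the equalities $a^3=a\cdot a^2=b\cdot a^2ba^2$ supply the required common multiples.

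For (ii), I would argue dynamically. I would realise $\Gamma_2\cong B_3$ as a group of lifts to $\mathbb R$ of the action of $PSL(2,\Z)$ on $\mathbb{RP}^1$, with $a^3$ acting as the deck translation $x\mapsto x+1$. I would then choose lifts of $a$ and $b$ satisfying the relation, together with a base point $x_0$ such that each of $a$ and $b$ maps the ray $[x_0,\infty)$ into $(x_0,\infty)$. Any nonempty positive word then maps $[x_0,\infty)$ strictly inside itself, so its image $g$ satisfies $g(x_0)>x_0$, and in particular $g\neq1$. Concretely, $a$ is the lift of the order-$3$ rotation $z\mapsto -1/(z+1)$, and $b$ the lift of a parabolic or order-$2$ element determined by $b=a^{-2}\Delta$. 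I would pick $x_0$ at a cusp of the Farey tessellation and choose the lifts so that $b$ fixes nothing to the right of $x_0$.

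The step I expect to be the main obstacle is (ii). Choosing lifts so that $b$ is genuinely nonnegative on the whole ray, not merely at $x_0$, requires care with the parabolic fixed points. A fallback would be a combinatorial proof: show that the word problem reduction of (i) never produces the empty word from a nonempty positive word. This could be done by comparing with Dehornoy's $\sigma$-ordering after the substitution $a=\sigma_1\sigma_2$, $b=\sigma_2\inv$, checking that every element of $M$ is $\sigma$-positive for a suitable convention, possibly after conjugating by $\Delta$ to swap $\sigma_1$ and $\sigma_2$.
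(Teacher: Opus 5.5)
\paragraph{Part (ii): the dynamical route does not work.}
The difficulty is not the one you flag. Since $b$ is increasing, $b(x_0)>x_0$ already gives $b([x_0,\infty))\subseteq(x_0,\infty)$. The real obstruction is that in a lift of the standard projective action, no point is moved forward by both $a$ and $b$. Once $a^3$ is the deck translation $T$, the lift of $b$ is forced, as you note yourself via $b=a^{-2}\Delta$.

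Concretely, let $\sigma_1,\sigma_2$ act by the lifts of $x\mapsto x+1$ and $x\mapsto x/(1-x)$ that have fixed points; both push points forward. Then:
\begin{itemize}
\item $\sigma_1\sigma_2\sigma_1$ is translation by half a period;
\item $a=\sigma_1\sigma_2$ moves every point forward, with $a^3=T$;
\item $b=\sigma_2^{-1}$ fixes the lifts of $0$ and moves every other point \emph{backward}.
\end{itemize}
The only other lifts compatible with $ba^2b=a$ replace $(a,b)$ by $(aT^{-2q},bT^{q})$. Here $b$ moves some point forward only if $q\ge1$, and then $a^3=T^{1-6q}$ moves every point backward. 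Reversing orientation merely swaps the two cases.

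There is also a structural reason. Lemma~\ref{lem:bab} gives $b^{-k}a,\ ab^{-k}\in P_2$. So in any action with $a^3=T$ in which all of $P_2$ moves $x_0$ forward, we get $a^{-1}(x_0)<b^k(x_0)<a(x_0)$ for all $k\in\Z$. Hence $b$ has fixed points on both sides of $x_0$ within less than one period, and so at least two fixed points on the circle. A parabolic element of $\operatorname{PSL}_2(\Z)$ has only one. The action you need is therefore non-standard. For example, blow up the orbit of the cusp fixed by $b$ and let $b$ push forward on the interval inserted at its own cusp. Constructing such an action is the missing idea.

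\paragraph{Part (i): termination is missing, but repairable.}
Your reduction to $u^{-1}v\in P_2\cup P_2^{-1}\cup\{1\}$ for $u,v\in M$ is correct, and so are the local moves $b^{-1}a\to a^2b$ and $a^{-1}b\to b^{-1}a^{-2}$. But termination is the whole content, and no Garside argument supplies it. The monoid $M$ is not Noetherian: $b^k$ left-divides $a$ for every $k$, so $a^3$ has infinitely many left divisors and there is no finite family of simples.

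A short fix is available. No move increases the number of $b$-letters, and every move preserves the abelianisation $a\mapsto 2$, $b\mapsto -1$. Hence, if $a^{-1}v$ reverses to a positive word $y$, then $|y|_b\le|v|_b$ and $|y|_a\le|v|_a-1$, so $|y|<|v|$. Now argue by induction on $|v|$:
\begin{itemize}
\item the case $a^{-1}bv'\to b^{-1}a^{-1}(a^{-1}v')$ makes two recursive calls, both on words shorter than $v$;
\item $b^{-1}v$ finishes in one step;
\item induction on $|u|$ then handles $u^{-1}v$.
\end{itemize}

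\paragraph{Your fallback for (ii).}
This is the right route. It is essentially the Dubrovina--Dubrovin argument, which the paper cites without giving a proof of its own. However, your convention is off. The element $b^k=\sigma_2^{-k}$ is $\sigma$-negative, and conjugating by $\Delta$ gives $\Delta b\Delta^{-1}=\sigma_1^{-1}$, so neither fixes it.

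Instead argue as follows. A nonempty positive word containing $a$ is a $\sigma_1$-positive braid word, with one $\sigma_1$ per $a$, so Dehornoy's Property~A makes it nontrivial. Words without $a$ are powers $b^k$, and $b^k\ne1$ by abelianising. Conversely, since $\sigma_1=ab$, every $\sigma_1$-positive braid can be written as $b^{p_0}ab^{p_1}\cdots ab^{p_m}$ with $m\ge1$. This lies in $P_2$ by Lemma~\ref{lem:bab}. So Dehornoy's trichotomy gives (i) as well.
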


Observe that
\[
  b\inv a=a^2b,
  \qquad
  ab\inv=ba^2.
\]

The one-sided case $b^{-s}a\in P_2$ for $s\ge0$ is~\cite[Lemma~5.1]{Su2020}; we record the two-sided extension needed below.

\begin{lemma}\label{lem:bab}
For all $p,q\in\Z$, the element $b^p a b^q$ belongs to $P_2$.
Moreover, $b^k\in P_2$ for every $k\ge1$.
\end{lemma}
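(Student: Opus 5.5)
The second assertion is immediate, since $b\in P_2$ and $P_2$ is a semigroup, so $b^k\in P_2$ for every $k\ge1$. For the first assertion I use the two identities $b\inv a=a^2b$ and $ab\inv=ba^2$ recorded above. Each moves a negative power of $b$ across an $a$ at the cost of turning it into a positive word. The plan is to eliminate all negative powers of $b$ from $b^pab^q$ by repeated application, ending with a word in $a,b$ only.

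First treat the left exponent. If $p\ge0$, then $b^pa$ is a positive word. If $p<0$, I prove by induction on $s\ge0$ that $b^{-s}a$ equals a positive word in $a,b$ in which $a$ occurs at the right end, or at least in which the factor adjacent to the right end is controllable. Concretely, $b\inv a=a^2b$. Then $b^{-s}a=b^{-(s-1)}a\cdot ab$, and applying the inductive hypothesis to $b^{-(s-1)}a$ gives a positive word $u_{s-1}$, so $b^{-s}a=u_{s-1}ab$. This is exactly the one-sided case $b^{-s}a\in P_2$ of \cite[Lemma~5.1]{Su2020}. Symmetrically, $ab^{-r}$ is a positive word by induction using $ab\inv=ba^2$: $ab^{-r}=ba\cdot ab^{-(r-1)}$.

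The two-sided case is where the difficulty lies, since both reductions need the same letter $a$. The point is to record the shape of the rewritten words. By the induction above, $b^{-s}a=u_s$ where $u_s$ is a positive word beginning with $a$ (indeed $u_1=a^2b$, and $u_s=u_{s-1}ab$ inherits the first letter). Then
\[
b^{-s}ab^{-r}=u_sb^{-r}=a\,u_s'\,b^{-r}.
\]
Since $u_s$ ends in $b$, I instead rewrite $b^{-s}ab^{-r}$ by first using $ab^{-r}=v_r$, where $v_r$ is positive and ends with $a$ (indeed $v_1=ba^2$, and $v_r=ba\,v_{r-1}$). So $b^{-s}ab^{-r}=b^{-s}v_r$. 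Now $v_r$ contains the letter $a$ but begins with $b$, so a naive induction fails. The cleanest route is a joint induction on $s+r$ via the identity
\[
b^{-s}ab^{-r}=b^{-(s-1)}(a^2b)b^{-r}=b^{-(s-1)}a\cdot\bigl(a\,b^{-(r-1)}\bigr).
\]
By the one-sided lemma, $ab^{-(r-1)}=v_{r-1}$ is positive, so this equals $b^{-(s-1)}a\,v_{r-1}$. Here $b^{-(s-1)}a=u_{s-1}$ is positive, with $u_0=a$. Hence $b^{-s}ab^{-r}=u_{s-1}v_{r-1}\in P_2$ for all $s,r\ge1$. This splitting of $a^2$ into one $a$ for each side is the key step I expect to need care. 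The mixed-sign cases then follow from the one-sided cases multiplied by positive powers of $b$ on the appropriate side, using closure of $P_2$.
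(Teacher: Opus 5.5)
Your proof is correct and is essentially the paper's argument. Your one-sided inductions give $b^{-s}a=a(ab)^s$ and $ab^{-r}=(ba)^r a$. Your splitting $b^{-s}ab^{-r}=(b^{-(s-1)}a)(ab^{-(r-1)})$ then produces exactly the paper's positive word $a(ab)^{s-1}(ba)^{r-1}a$. No induction on $s+r$ is needed there, and the abandoned false starts, including the claim that the rewritten word ends in $a$, can simply be deleted.
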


\begin{proof}
Induction using these identities gives, for $s,t\ge0$,
\[
  b^{-s}a=a(ab)^s,
  \qquad
  ab^{-t}=(ba)^t a.
\]
These identities handle the cases in which at most one of $p,q$ is negative.
If $p=-s<0$ and $q=-t<0$, then
\[
  b^{-s}ab^{-t}=a(ab)^{s-1}(ba)^{t-1}a,
\]
which is again a positive word.
The assertion about $b^k$ is immediate.
\end{proof}

As in~\cite[proof of Corollary~1.3]{Su2020}, let
\[
  \varphi:\Gamma_2\longrightarrow\Z/6\Z,
  \qquad
  \varphi(a)=4,\quad \varphi(b)=1.
\]
This defines an epimorphism: the relation is respected because
$\varphi(ba^2b)=1+2\cdot4+1\equiv4=\varphi(a)\pmod 6$,
and $\varphi$ is onto since $\varphi(b)=1$.
Thus $H:=\ker\varphi$ has index six.
The next proposition collects the specialization $(n,m,\mu)=(2,6,4)$ of~\cite[Proposition~5.2 and proof of Corollary~1.3]{Su2020} in the notation needed to pass this cone to a second subgroup.
\begin{proposition}[A positive cone on $H$]\label{prop:base-cone}
Write
\[
  F_2\times\Z=\langle x,y,z\mid[x,z], [y,z]\rangle.
\]
There is an isomorphism $\psi:H\to F_2\times\Z$ such that
\[
  \psi(ab^2)=x,
  \qquad
  \psi(a^2b^2a^2)=y,
  \qquad
  \psi(a^3)=z.
\]
Moreover,
\[
  Y:=\{ab^2,\ b\inv ab^3,\ b^{-2}ab^4,\ b^{-3}ab^5,
        \ b^{-4}a,\ b^{-5}ab,\ b^6\}
\]
generates the positive cone
\[
  P_H:=H\cap P_2=\langle Y\rangle^+
\]
of $H$.
\end{proposition}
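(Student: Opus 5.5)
The proof has two parts, handled separately: the isomorphism $\psi$, and the finite generation of $P_H$. The second part is a direct application of Lemma~\ref{lem:lifting}, so I start there.

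\emph{Step 1: the cone.} Take $G=\Gamma_2$, $P=P_2=\langle a,b\rangle^+$ (Theorem~\ref{thm:navas}), $S=\{a,b\}$ and $K=H=\ker\varphi$. Since $\varphi(b)=1$ generates $\Z/6\Z$, the set $T=\{1,b,b^2,b^3,b^4,b^5\}$ is a right transversal, and $\overline{g}=b^{\varphi(g)\bmod 6}$. The Schreier elements are then easy to list.
\begin{itemize}
\item For $s=b$ we get $\gamma(b^i,b)=1$ when $i\le 4$, and $\gamma(b^5,b)=b^6$.
\item For $s=a$ we have $\varphi(b^ia)=i+4$, so $\gamma(b^i,a)=b^iab^{-j}$ with $j\equiv i+4\pmod 6$ and $0\le j\le5$. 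This gives $ab^{-4}$, $bab^{-5}$, $b^2a$, $b^3ab^{-1}$, $b^4ab^{-2}$ and $b^5ab^{-3}$.
\end{itemize}
Every such element lies in $P_2\cup\{1\}$: the $a$-type ones do by Lemma~\ref{lem:bab}, and $b^6\in P_2$. Lemma~\ref{lem:lifting} therefore shows that $H\cap P_2$ is generated by these seven elements.

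These are not literally the elements of $Y$. I will either use a right transversal with the opposite convention, or use left cosets $gH$ with left transversal $T$, which gives elements $\overline{s t}^{-1}\,st$ of the form $b^{-j}ab^{i}$. Since $H$ is normal, the left-coset version of Lemma~\ref{lem:lifting} holds by the symmetric argument, or equivalently by applying the lemma to the cone under the anti-automorphism $g\mapsto g^{-1}$. In that version, $\gamma'(b^i,a)=b^{-j}ab^i$ with $j\equiv i+4\pmod 6$, which gives exactly $b^{-4}a$, $b^{-5}ab$, $ab^2$, $b^{-1}ab^3$, $b^{-2}ab^4$, $b^{-3}ab^5$, together with $b^6$. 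This is $Y$. I will write this version out carefully: it is the main bookkeeping point, since the hypothesis of the lemma must be checked in the form actually used.

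\emph{Step 2: the isomorphism.} Reidemeister--Schreier on $H$ with transversal $T$ gives the generators above. The relators are the six conjugates $b^i(ba^2ba^{-1})b^{-i}$, rewritten. These let one eliminate generators until three remain, namely $ab^2$, $a^2b^2a^2$ and $a^3$.

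For $z=a^3$, centrality needs an argument. In $B_3$ we have $a=\sigma_1\sigma_2$, and $(\sigma_1\sigma_2)^3=\Delta^2$ generates the centre; also $\varphi(a^3)=12\equiv0$, so $a^3\in H$.

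To identify $H$, I use that $H/\langle a^3\rangle$ is an index-6 subgroup of $B_3/Z\cong \Z/2*\Z/3$. This quotient is generated by the images of $\bar a$ (order $3$) and $ba$ (order $2$, since $(ba)^2=ba\,ba$ is central because $bab\cdot a=b a^2\cdot$\dots). The map $\varphi$ kills the finite-order elements $\bar a$ and $ba$ torsion-freely: $\varphi(a)=4$ has order $3$ and $\varphi(ba)=5$, so the induced map to $\Z/6$ is injective on each factor. By Kurosh, $H/Z$ is then free, and its Euler characteristic is $6\cdot(1/2+1/3-1)=-1$, so $H/Z$ is free of rank $2$. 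A central extension of a free group splits, so $H\cong F_2\times\Z$ once one checks that $H\cap Z=\langle a^3\rangle$, which holds because $\varphi(\Delta^2)=0$. Finally I check that the images of $ab^2$ and $a^2b^2a^2$ form a free basis of $H/Z$ by a covering-space or Stallings-fold computation on the index-$6$ coset graph. This free-basis verification is the main computational obstacle, and I would cite~\cite[Proposition~5.2]{Su2020} for it.
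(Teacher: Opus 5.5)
Your Step~1 is correct and matches the paper. The paper avoids the left-coset detour by taking the right transversal $\{1,b^{-1},\dots,b^{-5}\}$. Then $\gamma(b^{-s},a)=b^{-s}ab^{r}$ with $r\equiv s-4\pmod 6$, and $\gamma(1,b)=b^6$, so Lemma~\ref{lem:lifting} yields $Y$ verbatim. Your left-coset version is also valid (it is the lemma applied to $P_2^{-1}$ with the right transversal $T^{-1}$), but it is unnecessary.

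\textbf{The gap is in Step~2: $ba$ does not have order $2$ in $Q=\Gamma_2/\langle a^3\rangle$.}
\begin{itemize}
\item From $ba^2b=a$ you get $ba=ab^{-1}a^{-1}$, hence $(ba)^2=ab^{-2}a^{-1}$.
\item In $B_3$, $ba=\sigma_2^{-1}\sigma_1\sigma_2$ is conjugate to $\sigma_1$, which has infinite order modulo the centre.
\item Your own numbers rule the claim out. Since $\varphi(a^3)=0$, $\varphi$ factors through $Q$, and an involution of $Q$ cannot map to $\varphi(ba)=5$, which has order $6$ in $\Z/6\Z$.
\end{itemize}
So $Q\neq\langle\bar a\rangle*\langle\overline{ba}\rangle$, and your Kurosh argument is applied to a free-product decomposition that does not exist.

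The correct involution is $h=b^{-1}a=a^2b$; in $B_3$ this is $\sigma_2\sigma_1\sigma_2$. It satisfies $h^2=b^{-1}a^3b=a^3$, so $\Gamma_2=\langle a,h\mid a^3=h^2\rangle$ and $Q=\langle\bar a\rangle*\langle\bar h\rangle\cong C_3*C_2$. Moreover $\bar\varphi(\bar a)=4$ and $\bar\varphi(\bar h)=3$, so $\bar\varphi$ is injective on both factors. With this repair, your Kurosh and Euler-characteristic count and the splitting of the central extension go through. This is the paper's argument, which phrases the same fact as $\ker\bar\varphi=[Q,Q]$.

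You also leave the key identification to a citation: that $\bar x,\bar y$ form a basis of $H/\langle z\rangle$. The Reidemeister--Schreier elimination you mention is never carried out either. The paper only sketches this step too, but once $h$ is in place it is short:
\begin{itemize}
\item $H/\langle z\rangle$ is already known to be free of rank $2$, and finitely generated free groups are Hopfian, so it suffices to show that $\bar x,\bar y$ generate.
\item Since $\bar h^2=1$, you have $\bar b=\bar a\bar h$.
\item This gives $\bar x=\bar a^2\bar h\bar a\bar h=[\bar a^2,\bar h]$ and $\bar y=\bar h\bar a\bar h\bar a^2=[\bar a,\bar h]^{-1}$.
\end{itemize}
These form (up to inverting one element) the standard free basis of the commutator subgroup of $C_3*C_2$.
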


We also write $x=ab^2$, $y=a^2b^2a^2$ and $z=a^3$ for these elements of
$\Gamma_2$ themselves, so that $\psi$ carries each to the generator of
$F_2\times\Z$ bearing the same name in the sequel. 

\begin{proof}
\emph{Identifying the subgroup.}
The identification in~\cite[proof of Corollary~1.3]{Su2020} was
originally obtained using GAP. We sketch the computer-free argument
from~\cite[Section~12.4.2]{SuThesis}. Following the notation of the
thesis, write $f=a$ and $h=b\inv a$. In these generators,
\[
  \Gamma_2=\langle f,h\mid f^3=h^2\rangle\cong B_3.
\]
The common power $f^3=h^2=a^3=z$ generates the centre and has infinite
order. Hence
\[
  Q:=\Gamma_2/\langle z\rangle
    \cong C_3*C_2\cong\operatorname{PSL}_2(\Z).
\]
For $g\in\Gamma_2$, write $\bar g$ for its image in $Q$.
Since $\varphi(z)=0$, the map $\varphi$ induces
$\bar\varphi:Q\to\Z/6\Z$, with
$\bar\varphi(\bar f)=4$ and $\bar\varphi(\bar h)=3$.
Under the identification $Q_{\mathrm{ab}}\cong C_3\times C_2\cong C_6$,
this is the abelianisation map, so the image of $H$ in $Q$ is
$\ker\bar\varphi=[Q,Q]$. A direct calculation shows that
$\bar x$ and $\bar y$ generate this subgroup, while ping--pong for the standard action on
$\mathbb P^1(\mathbb R)$ shows that they generate it freely.
Thus $\langle x,y\rangle\to Q$ is injective and
$H/\langle z\rangle=\langle\bar x,\bar y\rangle$. Since $z$ is central,
\[
  H=\langle x,y\rangle\times\langle z\rangle\cong F_2\times\Z.
\]
This is the asserted isomorphism $\psi$.

\emph{Computing $P_H=H\cap P_2$.}
This is~\cite[Proposition~5.2]{Su2020}, specialised to $\Gamma_2$ and $\varphi$; we recover it from Lemma~\ref{lem:lifting}.
Take the right transversal
\[
  T:=\{1,b\inv,b^{-2},b^{-3},b^{-4},b^{-5}\}
\]
for $H$ in $\Gamma_2$.
It is a transversal because $\varphi(b^{-s})=-s$ runs through every residue modulo six as $0\le s\le5$.
For $t=b^{-s}$, where $0\le s\le5$, the representative of $ta$ is $b^{-r}$ with
$r\equiv s-4\pmod 6$ and $0\le r\le5$.
Thus the Schreier elements obtained from $a$ are the first six elements of $Y$,
and they lie in $P_2$ by Lemma~\ref{lem:bab}.

For the letter $b$, the Schreier element is trivial when $1\le s\le5$, while
$\gamma(1,b)=b^6\in P_2$.
The non-trivial Schreier elements are therefore exactly those in $Y$, and Lemma~\ref{lem:lifting} gives $P_H=\langle Y\rangle^+$.
\end{proof}

\section{The cyclic cover}\label{sec:cover}

We now begin the higher-rank extension beyond~\cite[Section~5]{Su2020}.
To pass this positive cone to a further subgroup, we will use powers of $b^6$ as coset representatives.
The defining relation and the centrality of $z=a^3$ give
\[
  b^6=(a^2b^2a^2)\inv(ab^2)\inv(a^2b^2a^2)(ab^2)a^{-3},
\]
so that $\psi(b^6)=y\inv x\inv yx z\inv$.
Its image in the free factor of $F_2\times\Z$ is therefore
\[
  g:=y\inv x\inv yx\in F_2=\langle x,y\rangle.
\]
We now construct an index-$(n-1)$ subgroup $K_n\le F_2$ for which the action of $g$ on right cosets is transitive exactly when $n$ is even.

Fix $n\ge2$ and let
$V_n:=\{v_0,v_1,\dots,v_{n-2}\}$.
Define the following elements of $\Sym(V_n)$:
\[
  \sigma_x
    := \prod_{j=1}^{\lfloor (n-2)/2\rfloor}
       (v_{2j-1}\,v_{2j}),
  \qquad
  \sigma_y
    := \prod_{j=1}^{\lfloor (n-1)/2\rfloor}
       (v_{2j-2}\,v_{2j-1}),
\]
where an empty product denotes the identity.
Thus $\sigma_x^2=\sigma_y^2=1$.
Moreover, $\sigma_x$ fixes $v_0$, and, for $n\ge3$, the terminal vertex $v_{n-2}$ is fixed by $\sigma_y$ when $n$ is even and by $\sigma_x$ when $n$ is odd.

These permutations define a right action of $F_2=\langle x,y\rangle$ on $V_n$ by
\[
  v\cdot x:=\sigma_x(v),\qquad v\cdot y:=\sigma_y(v).
\]
Since $\sigma_x^2=\sigma_y^2=1$, the inverse letters act in the same way.
Set
\[
  K_n:=\Stab_{F_2}(v_0).
\]
The corresponding labelled cover is shown schematically in Figure~\ref{fig:cover}.

\begin{figure}[!htb]
\centering
\begin{tikzpicture}[
    >={Stealth[length=2mm]},
    vertex/.style={circle,fill=black,inner sep=1.6pt},
    xedge/.style={red!75!black,thick},
    yedge/.style={blue!70!black,thick},
    scale=0.95
  ]
  \foreach \i/\lab in {0/{v_0},1/{v_1},2/{v_2},3/{v_3}} {
    \node[vertex] (v\i) at (1.7*\i,0) {};
    \node[below=3pt] at (1.7*\i,0) {\scriptsize $\lab$};
  }
  \node[vertex] (vt) at (8.5,0) {};
  \node[below=5pt] at (vt) {\scriptsize $v_{n-2}$};
  \node at (6.9,0) {$\cdots$};
  \draw[xedge,->] (v0) .. controls (-1.5,0.85) and (-1.5,-0.85) .. (v0);
  \node[red!75!black] at (-1.40,0) {\scriptsize $x$};
  \draw[yedge,->] (v0) to[bend left=38] (v1);
  \draw[yedge,->] (v1) to[bend left=38] (v0);
  \node[blue!70!black] at (0.85,0.7) {\scriptsize $y$};
  \draw[xedge,->] (v1) to[bend left=38] (v2);
  \draw[xedge,->] (v2) to[bend left=38] (v1);
  \node[red!75!black] at (2.55,0.7) {\scriptsize $x$};
  \draw[yedge,->] (v2) to[bend left=38] (v3);
  \draw[yedge,->] (v3) to[bend left=38] (v2);
  \node[blue!70!black] at (4.25,0.7) {\scriptsize $y$};
  \draw[thick,->] (vt) .. controls (9.7,0.85) and (9.7,-0.85) .. (vt);
  \node[align=left,anchor=west] at (9.85,0)
    {\scriptsize $y$ if $n$ is even,\\[-1pt]\scriptsize $x$ if $n$ is odd.};
\end{tikzpicture}
\caption{Schematic form of the $(n-1)$-sheeted cover for $n\ge6$.
The cases $3\le n\le5$ are obtained by truncating the chain, while for $n=2$ there is a single vertex.
The edge labels alternate, with an $x$-loop at $v_0$; the label of the terminal loop records the parity of $n$.}
\label{fig:cover}
\end{figure}
\FloatBarrier

Equivalently, these permutations encode a labelled $(n-1)$-sheeted cover of the two-petal rose.
Following a $y$-edge and then an $x$-edge gives the permutation $\theta$ used below.

\begin{proposition}\label{prop:cycle}
Let $\theta\in\Sym(V_n)$ be defined by
\[
  \theta(v):=v\cdot(yx)=\sigma_x(\sigma_y(v)).
\]
Then:
\begin{enumerate}
  \item $v\cdot g=\theta^2(v)$ for every $v\in V_n$;
  \item $\theta$ is an $(n-1)$-cycle;
  \item the action of $F_2$ on $V_n$ is transitive, and $K_n$ has index
  $n-1$ in $F_2$ and is free of rank $n$; thus $K_n\cong F_n$;
  \item $\langle g \rangle$ acts transitively on $V_n$ if and only if $n$ is even.
\end{enumerate}
\end{proposition}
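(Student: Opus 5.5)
For part (1), I will use only that the action is a right action and that each generator acts by an involution. Then $v\cdot g = v\cdot y\inv x\inv y x = \sigma_x\sigma_y\sigma_x\sigma_y(v)$, because $x\inv$ acts as $x$ and $y\inv$ acts as $y$. Applying letters left to right, this is $\theta(\theta(v))=\theta^2(v)$, since $\theta=\sigma_x\circ\sigma_y$.

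For part (2), I will compute $\theta$ explicitly on the path $v_0,v_1,\dots,v_{n-2}$. The transpositions of $\sigma_y$ pair $(v_{2j-2},v_{2j-1})$ and those of $\sigma_x$ pair $(v_{2j-1},v_{2j})$. Consequently $\sigma_y$ moves each even-indexed vertex up by one, except the terminal one when it is unpaired, and each odd-indexed vertex down by one. $\sigma_x$ does the reverse with the parities shifted. Composing, I expect
\[
\theta(v_{2j})=v_{2j+2}\quad\text{and}\quad\theta(v_{2j+1})=v_{2j-1},
\]
with turnarounds at the ends. At $v_0$ we have $\sigma_y(v_0)=v_1$ and $\sigma_x(v_1)=v_2$. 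At $v_1$ we have $\sigma_y(v_1)=v_0$ and $\sigma_x(v_0)=v_0$, so $v_1\mapsto v_0$. At the far end the terminal loop reverses direction. So $\theta$ runs up through the even-indexed vertices, turns at $v_{n-2}$ (or $v_{n-3}$), and runs down through the odd-indexed ones back to $v_0$. This visits all $n-1$ vertices in a single cycle. I will verify the terminal turnaround separately for $n$ even (where $v_{n-2}$ is even-indexed and fixed by $\sigma_y$) and $n$ odd (where it is fixed by $\sigma_x$), and handle $n=2,3$ by hand. This bookkeeping at the ends is the only fiddly step.

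For part (3), transitivity follows immediately from (2), since $\theta$ is the action of $yx\in F_2$. Hence $[F_2:K_n]=|V_n|=n-1$ by orbit–stabiliser. By the Schreier index formula, $K_n$ is free of rank $(n-1)(2-1)+1=n$.

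For part (4), by (1) the group $\langle g\rangle$ acts on $V_n$ through $\langle\theta^2\rangle$. The square of an $m$-cycle is a single $m$-cycle exactly when $m$ is odd, and splits into two cycles otherwise. Here $m=n-1$, which is odd if and only if $n$ is even. For $n=2$ the action is trivially transitive on a single point.
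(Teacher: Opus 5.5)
Your proposal is correct and follows the paper's proof essentially step for step. It uses the same left-to-right reading of $g=y\inv x\inv yx$ as $\sigma_y,\sigma_x,\sigma_y,\sigma_x$, the same up-the-evens/down-the-odds description of $\theta$ with turns at $v_1$ and at the terminal vertex, orbit--stabiliser together with the Schreier index formula, and the parity criterion for the square of an $(n-1)$-cycle. The only cosmetic difference is that the paper checks $n\le5$ by hand before using the general pattern, while you check only $n=2,3$. This is fine, since your general pattern is already valid for $n=4,5$, where it gives $(v_0\ v_2\ v_1)$ and $(v_0\ v_2\ v_3\ v_1)$.
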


\begin{proof}
(1) Reading the word $g=y\inv x\inv yx$ from left to right applies $\sigma_y,\sigma_x,\sigma_y,\sigma_x$, because $\sigma_x^2=\sigma_y^2=1$.
This is $\theta^2$.

(2) When $n=2$, we have $V_2=\{v_0\}$, so $\theta$ is the identity
permutation on $V_2$. For $n=3,4,5$, direct calculation gives, respectively,
$(v_0\ v_1)$, $(v_0\ v_2\ v_1)$, and $(v_0\ v_2\ v_3\ v_1)$.
Now suppose that $n\ge6$. Since $\theta$ applies
$\sigma_y$ and then $\sigma_x$, the staggered swaps move two places to the
right on even-indexed vertices and two places to the left on odd-indexed
vertices, except at the turns: $\theta(v_1)=v_0$, while
$\theta(v_{n-2})=v_{n-3}$ if $n$ is even and
$\theta(v_{n-3})=v_{n-2}$ if $n$ is odd. Hence
\[
\begin{array}{ll}
\theta=(v_0\ v_2\ \cdots\ v_{n-2}\ v_{n-3}\ v_{n-5}\ \cdots\ v_1),
  & n\text{ even},\\[2mm]
\theta=(v_0\ v_2\ \cdots\ v_{n-3}\ v_{n-2}\ v_{n-4}\ \cdots\ v_1),
  & n\text{ odd}.
\end{array}
\]
In either case every vertex of $V_n$ appears exactly once, so $\theta$ is an
$(n-1)$-cycle.

(3) Since $\theta$ is the action of $yx\in F_2$ and is an $(n-1)$-cycle, the
$F_2$-action is transitive. Orbit--stabiliser gives
$[F_2:K_n]=|V_n|=n-1$, and the Nielsen--Schreier formula gives
$\operatorname{rank}(K_n)=(n-1)(2-1)+1=n$.

(4) By (1) and (2), $g$ acts as the square of an $(n-1)$-cycle.
The square of an $(n-1)$-cycle is transitive exactly when $n-1$ is odd, or equivalently when $n$ is even.
\end{proof}

\begin{remark}
Concretely, $v_i$ corresponds to the right coset $K_n u_i$, where $u_i$ is the alternating word $yxyx\cdots$ of length $i$.
\end{remark}

\begin{remark}\label{rmk:odd-transversal}
For odd $n$, Proposition~\ref{prop:cycle} shows only that the powers of $b^6$ do not form a transversal: the projected action has two orbits.
The nonexistence of a finitely generated positive cone for odd $n$ comes from~\cite[Theorem~1.1]{MMRT}, not from this computation.
\end{remark}

\section{The construction for even \texorpdfstring{$n$}{n}}\label{sec:main}

Proposition~\ref{prop:base-cone} supplies the positive generating set $Y$, which contains $b^6$; Proposition~\ref{prop:cycle} supplies the power transversal when $n$ is even.
We now apply Lemma~\ref{lem:lifting} a second time.

\begin{proof}[Proof of Theorem~\ref{thm:main}]
Fix an even integer $n\ge2$.
Let $\rho:F_2\times\Z\to F_2=\langle x,y\rangle$ be the projection and define
\[
  K:=\psi\inv\bigl(\rho\inv(K_n)\bigr)\le H.
\]
By Proposition~\ref{prop:cycle}(3), $K_n\cong F_n$ and
$[F_2:K_n]=n-1$.
Since $\rho\inv(K_n)=K_n\times\langle z\rangle$, we have
\[
  K\cong F_n\times\Z
  \ \le\ \ H\cong F_2\times\Z
  \ \le\ \ \Gamma_2,
  \qquad
  [H:K]=n-1,
  \quad
  [\Gamma_2:H]=6.
\]
Give $K\backslash H$ the right-multiplication action and let $H$ act on $V_n$ through $\rho\circ\psi$.
The latter action is transitive by Proposition~\ref{prop:cycle}(3), and the stabiliser of $v_0$ is
$\psi\inv(\rho\inv(K_n))=K$.
Therefore the orbit map
\[
  K\backslash H\longrightarrow V_n,
  \qquad
  Kq\longmapsto v_0\mathbin{\cdot}\rho(\psi(q)),
\]
is a well-defined $H$-equivariant bijection.

\medskip
\noindent\emph{A cyclic transversal.}
We have $\rho(\psi(b^6))=g$.
Because $n$ is even, Proposition~\ref{prop:cycle} shows that $b^6$ acts as an $(n-1)$-cycle on $K\backslash H$.
Consequently
\[
  T:=\{b^{6k}\mid 0\le k\le n-2\}
\]
is a right transversal for $K$ in $H$, and $b^{6(n-1)}\in K$.

\medskip
\noindent\emph{Positivity of the Schreier elements.}
We verify the hypothesis of Lemma~\ref{lem:lifting} for the cone
$P_H=\langle Y\rangle^+$, the subgroup $K$, and the displayed
transversal $T$.
Let $t=b^{6k}$ with $0\le k\le n-2$ and let $w\in Y$.

First suppose that
$w=b^{-s}ab^r$ is one of the six elements of $Y$ containing $a$.
For some $0\le k'\le n-2$, the representative of $tw$ is $\overline{tw}=b^{6k'}$, and hence
\[
  \gamma(t,w)=tw\,\overline{tw}\inv
    =b^{6k-s}ab^{r-6k'}\in P_2
\]
by Lemma~\ref{lem:bab}.
As $\gamma(t,w)$ belongs to $K$ by definition and $K\le H$, it lies in $P_H$.

Now let $w=b^6$.
If $k<n-2$, then $tw=b^{6(k+1)}\in T$ and $\gamma(t,w)=1$.
For $k=n-2$, the representative of $tw=b^{6(n-1)}\in K$ is $1$, so
\[
  \gamma(b^{6(n-2)},b^6)=b^{6(n-1)}\in P_H.
\]
Here the last membership follows from $b^6\in Y\subseteq P_H$ and closure of $P_H$ under multiplication.
Thus every Schreier element belongs to $P_H\cup\{1\}$.

By Lemma~\ref{lem:lifting}, $K\cap P_H$ is therefore a finitely generated positive cone of
$K\cong F_n\times\Z$. Since $n\ge2$ was an arbitrary even integer,
this proves Theorem~\ref{thm:main}.
\end{proof}

\section*{Acknowledgements}
This work grew out of the author's doctoral thesis, which would not have been possible without the support and supervision of Yago Antol\'{\i}n.

\end{document}